\newtheorem{thm}{Theorem}
\newtheorem{lem}{Lemma}
\newtheorem{claim}{Claim}
\newtheorem{cor}{Corollary}
\newtheorem{defn}{Definition}
\newcommand{\Z}{{\Z B}}
\let\oldenumerate\enumerate
\renewcommand{\enumerate}{
  \oldenumerate
  \setlength{\itemsep}{0pt}
  \setlength{\parskip}{0pt}
  \setlength{\parsep}{0pt}
}
\def\vertex(#1){\put(#1){\circle*{2}}}
\def\vertexo(#1){\put(#1){\circle{2}}}
\def\vert(#1){\put(#1){\circle*{1.5}}}
\def\verto(#1){\put(#1){\circle{1.5}}}
\def\lab(#1)#2{\put(#1){\makebox(0,0)[c]{#2}}}
\begin{document}

\title{ An improved lower bound on the number\\ of $1$-nearly independent vertex subsets}

\author{Zekhaya B. Shozi \thanks{Research supported by University of KwaZulu-Natal.}\\
	School of Mathematics, Statistics \& Computer Science\\
	University of KwaZulu-Natal\\
	Durban, 4000 South Africa\\
\small \tt Email: zekhaya@aims.ac.za
}

\date{}
\maketitle

\begin{abstract}
Let $G=(V(G),E(G))$ be a graph with set of vertices $V(G)$ and set of edges $E(G)$. For $k\ge 0$ an integer, a subset $I_k$ of $V(G)$ is called a $k$-nearly independent vertex subset of $G$ if $I_k$ induces a subgraph of size $k$ in $G$. The number of such subsets in $G$ is denoted by $\sigma_k(G)$. In this paper we continue the study of $\sigma_1$. In particular, we prove the lower bound on $\sigma_1$ for a connected graph that contains a cycle and also characterise the two extremal graphs. This improves the result obtained in [E. O. D. Andriantiana and Z. B. Shozi. The number of 1-nearly independent vertex
subsets. \textit{Quaestiones Mathematicae}, accepted].
 \end{abstract}

{\small \textbf{Keywords:} $1$-nearly independent vertex subset; Good graph; Good edge. } \\
\indent {\small \textbf{AMS subject classification:} 05C69}
\newpage
	
\section{Introduction}

A simple and undirected graph $G$ is an ordered pair of sets $(V(G),E(G))$, where $V(G)$ is a nonempty set of elements which are called \emph{vertices} and $E(G)$ is a (possibly empty) set of $2$-element subsets of $V(G)$ which are called \emph{edges}. The number of vertices of a graph $G$ is the \emph{order} of $G$ and is denoted by $n$, while the number of edges of $G$ is a the \emph{size} of $G$ and is denoted by $m$. For convenience, we often write $uv$ instead of $\{u,v\}$ to represent the edge joining the vertices $u$ and $v$ in a graph $G$.

A subset $I_0$ of $V(G)$ is called an \emph{independent (vertex) subset} of $G$ if no two vertices in $I_0$ are adjacent in $G$. The number of independent  vertex subsets of a graph has been well-studied in the literature. See the survey in \cite{wagner2010maxima}, where it is called the \emph{Merrifield-Simmons index}. Also the book \cite{li2012shi} contains an extensive survey around this topic. Merrifield and Simmons \cite{Merrifield198055} used the number independent  vertex subsets  of molecular graphs and as a measure of  molecular complexity, bond strength and boiling point  of the associated molecules. This ignited the interest of both chemists and mathematicians to study this graph invariant. The results obtained exceed the class of molecular graphs. In \cite{Andriantiana2013724} the structure of the tree with a given degree sequence $D$ that has the largest number of independent  vertex subsets is fully characterised. The result implies, as corollaries, characterisations of trees with largest number of independent  vertex subsets in various other classes like trees with fixed order, or with fixed order and given maximum degree.

Various ways of generalisation of the notion of independent  vertex subsets have been attempted. For example \cite{jagota2001generalization} generalised the concept of maximal-independent set, by considering the $k$-insulated set $S$ of a graph $G$ defined as a subset of its vertices such that each vertex in $S$ is adjacent to at most $k$ other vertices in $S$ and each vertex not in $S$ is adjacent to at least $k+1$ vertices in $S$. See also \cite{drmota1991generalized}, which studies subsets which do not contain pair of vertices with distance shorter than a specified integer $k$. 

Andriantiana and Shozi \cite{andriantiana2024number} proposed a new other generalisation. Firstly, they defined a \emph{$k$-nearly independent vertex subset} as a subset $I_k$ of $V(G)$ such that $I_k$ contains exactly $k$ pairs of adjacent vertices of $G$. They then denoted the number of $k$-nearly independent vertex subsets of $G$ by $\sigma_k(G)$. Remarkably, $\sigma_0(G)$ is the number of independent  vertex subsets of $G$. In their paper, they studied $\sigma_1$, where they established that the lower bound on $\sigma_1$ is uniquely attained by the star $K_{1,n-1}$. Since the star $K_{1,n-1}$ is a tree, it does not contain a cycle. It is, therefore, a natural question to find the lower bound on $\sigma_1$ if the given graph contains a cycle.

The rest of the paper is structured as follows. Section \ref{preliminary} is a preliminary section that contains some technical formulas that will be used in this paper. We will also include some known results of $\sigma_1$ in this section. Our main result is in Section \ref{Sec:Main}, where we study all graphs of order $n$ and size $m$ that contain at least one cycle and characterise those that achieve the minimum $\sigma_1$.

\section{Preliminary}
\label{preliminary}

For graph theory notation and terminology, we generally follow~\cite{henning2013total}. Let $G$ be a graph with vertex set $V(G)$, edge set $E(G)$, order $n = |V(G)|$ and size $m = |E(G)|$. We denote the degree of a vertex $v$ in $G$ by $\deg_G(v)$. The minimum and maximum degree of $G$ is denoted by $\delta(G)$ and $\Delta(G), respectively.$ A vertex of degree~$q$ in $G$ whose neighbors have degrees~$d_1, d_2, \ldots, d_q$, respectively, where $d_1 \le d_2 \le \cdots \le d_q$ is called a $(d_1,d_2,\ldots,d_q)$-\emph{vertex}.

For positive integers $r$ and $s$, we denote by $K_{r,s}$ the complete bipartite graph with partite sets $X$ and $Y$ such that $|X|=r$ and $|Y|=s$. A complete bipartite graph $K_{1,n-1}$ is also called a \emph{star} in the literature. For a subset $S$ of $V(G)$, we define $G-S$ as the subgraph obtained from $G$ by deleting all the vertices in $S$ along with their incident edges. If $S = \{u\}$, we write $G - u$ instead of $G - \{u\}$.  The complement of a graph $G$ is denoted by $\overline{G}$. A path, a cycle and a complete graph on $n$ vertices is denoted by $P_n$, $C_n$ and $K_n$, respectively.

\subsection{Recursive formula}

For any vertex $v$ of a graph $G$,
\begin{align*}
\label{Eq:Rec}
\sigma_1(G)
=\sigma_1(G-v)+\sigma_1(G-N[v])+\sum_{u\in N(v)}\sigma_0(G-(N[u]\cup N[v])),
\end{align*}
where $\sigma_1(G-v)$ counts the number of $1$-nearly independent  vertex subsets that do not contain $v$, $\sigma_1(G-N[v])$ counts those that contain $v$ as a single vertex, and $\sum_{u \in N(v)}\sigma_0(G-(N[u]\cup N[v]))$ counts those that contain $v$ is as an edge.

\begin{defn}
\label{Def:Good}
    Let $G = (V(G), E(G))$ be a graph with vertex set $V(G)$ and edge set $E(G)$. If $e=uv \in E(G)$, then $e$ is a good edge if $N_G[u]\cup N_G[v] = V(G)$. The graph $G$ is a good graph if for every edge $e \in E(G)$, $e$ is a good edge. Let
    $$\mathcal{H} = \{ G \mid G \text{ is a good graph} \}.$$
\end{defn}
It follows from the definition that a good graph has to be connected.

Andriantiana and Shozi \cite{andriantiana2024number} established the following result.
\begin{thm}[\cite{andriantiana2024number}]
\label{sigma-1-of-G-at-least-m-with-equality-iff-G-is-a-good-graph}
    If $G$ is a connected graph of size $m$, then
    \begin{align*}
        \sigma_1(G) \ge m,
    \end{align*}
    with equality if and only if $G \in \mathcal{H}$.
\end{thm}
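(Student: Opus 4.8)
The plan is to evaluate $\sigma_1(G)$ exactly by an edge-indexed count, so that both the bound and the equality case fall out of one elementary property of $\sigma_0$. Every subset $S\subseteq V(G)$ counted by $\sigma_1(G)$ induces a subgraph of size exactly one, hence $G[S]$ contains a \emph{unique} edge; I would therefore partition these subsets according to that edge. Fixing an edge $uv\in E(G)$, a set $S$ has $uv$ as the sole edge of $G[S]$ precisely when $u,v\in S$ and $S\setminus\{u,v\}$ contains no neighbour of $u$ or $v$ and is itself independent. These are exactly the sets $\{u,v\}\cup T$ with $T$ an independent subset of $G-(N_G[u]\cup N_G[v])$, so the edge $uv$ contributes $\sigma_0\!\left(G-(N_G[u]\cup N_G[v])\right)$ subsets.

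Summing over all edges then gives the closed form
$$\sigma_1(G)=\sum_{uv\in E(G)}\sigma_0\!\left(G-(N_G[u]\cup N_G[v])\right),$$
which is just the global counterpart of the recursive formula of Section~\ref{preliminary} (one could alternatively derive it by induction on $n$ from that recursion). With this identity in hand, everything reduces to the observation that for any graph $H$ one has $\sigma_0(H)\ge 1$, with equality if and only if $V(H)=\emptyset$: the empty set is always independent, while a single vertex would supply a second independent subset.

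Applying this to each of the $m$ summands immediately yields $\sigma_1(G)\ge m$. For the equality statement, since each summand is at least $1$ and there are exactly $m$ of them, $\sigma_1(G)=m$ holds if and only if $\sigma_0\!\left(G-(N_G[u]\cup N_G[v])\right)=1$ for every edge $uv$, i.e. if and only if $N_G[u]\cup N_G[v]=V(G)$ for every $uv\in E(G)$. By Definition~\ref{Def:Good} this says every edge of $G$ is good, that is, $G\in\mathcal{H}$. This is consistent with the connectivity hypothesis: a good graph is connected, and indeed in any disconnected graph an edge $uv$ has $N_G[u]\cup N_G[v]$ trapped inside one component, forcing the corresponding summand to exceed $1$.

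The only step demanding care is the per-edge count, namely verifying that the admissible ``padding'' sets $T$ are in bijection with the independent subsets of $G-(N_G[u]\cup N_G[v])$, with neither over- nor under-counting across different edges; once this bijection is pinned down, the inequality and its sharp characterisation are both immediate, so I anticipate no deeper obstacle.
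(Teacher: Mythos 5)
Your proof is correct: the partition of the $1$-nearly independent subsets by the unique induced edge gives exactly $\sigma_1(G)=\sum_{uv\in E(G)}\sigma_0\bigl(G-(N_G[u]\cup N_G[v])\bigr)$, and the observation that $\sigma_0(H)\ge 1$ with equality precisely when $V(H)=\emptyset$ yields both the bound and the characterisation of equality via Definition~\ref{Def:Good}. The paper does not reprove this theorem (it is quoted from the cited reference), but your identity is just the global form of the recursive formula recorded in Section~\ref{preliminary}, so the argument is in the same spirit as what the paper relies on and I see no gap.
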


Since the star $K_{1,n-1}$ is the element of $\mathcal{H}$ with fewest edges among all elements of $\mathcal{H}$ with $n$ vertices, we obtain the following corollary.

\begin{cor}[\cite{andriantiana2024number}]
    \label{among-all-connected-graphs-of-order-n-the-star-has-the-smallest-sigma-1}
    If $G$ is a connected graph of order $n$, then 
    \begin{align*}
        \sigma_1(G) \ge n-1,
    \end{align*}
    with equality if and only if $G \cong K_{1, n-1}$.
\end{cor}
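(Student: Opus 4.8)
The plan is to derive the corollary from Theorem~\ref{sigma-1-of-G-at-least-m-with-equality-iff-G-is-a-good-graph} together with the elementary fact that a connected graph on $n$ vertices has at least $n-1$ edges. First I would record that if $G$ is connected of order $n$ and size $m$, then $m \ge n-1$, with equality precisely when $G$ is a tree. Combining this with the inequality $\sigma_1(G) \ge m$ supplied by Theorem~\ref{sigma-1-of-G-at-least-m-with-equality-iff-G-is-a-good-graph} yields the chain $\sigma_1(G) \ge m \ge n-1$, which already gives the stated bound.

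For the equality characterisation I would observe that $\sigma_1(G) = n-1$ forces \emph{both} inequalities in the chain $\sigma_1(G) \ge m \ge n-1$ to be tight. Tightness of $m = n-1$ says that $G$ is a tree, while tightness of $\sigma_1(G) = m$ says that $G \in \mathcal{H}$, by the equality clause of Theorem~\ref{sigma-1-of-G-at-least-m-with-equality-iff-G-is-a-good-graph}. The whole problem therefore reduces to identifying which trees are good graphs, and I would claim the answer is exactly the stars.

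This good-tree characterisation is the substantive step and the main obstacle. I would argue it directly: let $T \in \mathcal{H}$ be a tree with $n \ge 2$ and fix any edge $uv$. Deleting $uv$ splits $T$ into two subtrees $T_u \ni u$ and $T_v \ni v$. Since adjoining any edge to a tree creates a cycle, no vertex of $T_u$ other than $u$ can be adjacent to $v$; because $uv$ is a good edge (so $N_T[u] \cup N_T[v] = V(T)$), every vertex of $T_u$ must then lie in $N_T[u]$, i.e. $T_u$ is a star centred at $u$, and symmetrically $T_v$ is a star centred at $v$. Hence $T$ is a double star whose only possible non-leaf vertices are $u$ and $v$. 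It remains to rule out the case in which both centres carry a leaf: if $u$ had a leaf neighbour $x$ and $v$ had a leaf neighbour $y$, then the good edge $ux$ would require $N_T[u] \cup N_T[x] = V(T)$, yet this union omits $y$, a contradiction. Consequently one of $u,v$ is itself a leaf, forcing $T \cong K_{1,n-1}$.

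Finally, for the converse (the \say{if} direction) I would simply note that $K_{1,n-1}$ is connected, belongs to $\mathcal{H}$, and has exactly $n-1$ edges, so Theorem~\ref{sigma-1-of-G-at-least-m-with-equality-iff-G-is-a-good-graph} immediately gives $\sigma_1(K_{1,n-1}) = n-1$. The only delicate point in the whole argument is the edge-splitting characterisation of good trees in the third paragraph; everything else is routine bookkeeping with the connectivity bound $m \ge n-1$.
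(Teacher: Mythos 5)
Your proposal is correct and follows essentially the same route as the paper: combine Theorem~\ref{sigma-1-of-G-at-least-m-with-equality-iff-G-is-a-good-graph} with the bound $m \ge n-1$ for connected graphs and observe that the star is the unique good graph attaining both equalities. The only difference is that the paper merely asserts that $K_{1,n-1}$ is the member of $\mathcal{H}$ with the fewest edges among those of order $n$, whereas you actually supply a (correct) proof of the key step that the only good tree is the star.
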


\section{Main result}
\label{Sec:Main}

In this section we provide a characterisation of the structure of a graph that contains a cycle and has the minimum $\sigma_1$. We first prove the following important lemmas that will be useful in proving our main result.
 \begin{lem}
    \label{G-good-graph-with-a-cycle-then-G-cannot-contain-a-bridge}
        If $G$ is a good graph and $G$ contains a cycle, then $G$ does not contain a bridge.
    \end{lem}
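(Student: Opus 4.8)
The plan is to argue by contradiction. Suppose $G$ is a good graph that contains a cycle and that, contrary to the claim, also contains a bridge $e=xy$. Deleting $e$ disconnects $G$ into exactly two components; let $A$ be the component containing $x$ and $B$ the component containing $y$, so that $xy$ is the \emph{unique} edge of $G$ joining a vertex of $A$ to a vertex of $B$. This single-crossing-edge property is the structural fact I will exploit.

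Next I would locate the cycle. Since no bridge can lie on a cycle, every cycle of $G$ is contained entirely within $A$ or entirely within $B$; after relabelling $x\leftrightarrow y$ and $A\leftrightarrow B$ if necessary, I may assume a cycle $C$ lies inside $A$. Because $C$ has length at least $3$ and the vertex $x$ can appear on $C$ at most once, at least one edge $ab$ of $C$ has both endpoints different from $x$; in particular $a,b\in A\setminus\{x\}$.

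The crux is then to exhibit a vertex of $G$ that is not dominated by the closed neighbourhoods of $a$ and $b$, which contradicts that $ab$ is a good edge. I claim $y$ is such a vertex. Indeed, $y\in B$ while $a,b\in A$, so $y\neq a$ and $y\neq b$. Moreover, since $xy$ is the only edge between $A$ and $B$, the only neighbour of $y$ lying in $A$ is $x$; as $a,b\neq x$, the vertex $y$ is adjacent to neither $a$ nor $b$. Hence $y\notin N_G[a]\cup N_G[b]$, so $N_G[a]\cup N_G[b]\neq V(G)$ and $ab$ fails to be a good edge, contradicting the hypothesis that $G$ is good.

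I expect the only point genuinely requiring care to be the selection of the edge $ab$: one must invoke that a cycle has length at least $3$ to guarantee an edge avoiding $x$, since a single vertex cannot be incident to every edge of a triangle or longer cycle. It is worth remarking that the goodness of the bridge $xy$ itself is never used; the contradiction is driven purely by the goodness of the interior cycle edge $ab$ together with the fact that a bridge admits only one crossing edge, which strands the vertex $y$ outside $N_G[a]\cup N_G[b]$.
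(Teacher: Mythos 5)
Your proof is correct and takes essentially the same route as the paper: both locate a cycle inside one component of $G-e$, select a cycle edge avoiding the bridge endpoint, and exhibit a vertex across the bridge that lies outside the union of the closed neighbourhoods of that edge's endpoints. The only cosmetic difference is that you use the bridge endpoint $y$ itself as the undominated witness, whereas the paper takes an arbitrary vertex of the far component.
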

    \begin{proof}
        Let $G$ be a good graph that contains a cycle, and suppose that $G$ contains a bridge $e=uv$. Then $G-e$ is disconnected and has two components, namely the component $G_u$ containing $u$ and the component $G_v$ containing $v$.  Let $N_G(u) \setminus \{v\} = \{u_1, u_2, \ldots, u_{\deg_G(u)-1}\}$ and $N_G(v) \setminus \{u\} = \{v_1, v_2, \ldots, v_{\deg_G(v)-1}\}$, where possibly $N_G(v) \setminus \{u\} = \emptyset$. If $w_1 \in V(G_u)$ and $w_2 \in V(G_v)$, then every $w_1$-$w_2$ path in $G$ traverses the edge $e$, otherwise $e$ would not be a bridge. If $C$ is a cycle of $G$, then $C$ lies entirely on $G_u$ or entirely on $G_v$. Without any loss of generality, suppose that $C$ lies on $G_u$, and let $xy$ be an edge of $C$ such that $x\ne u\ne y$. Thus, in addition to the $1$-nearly independent  vertex subsets made of the endpoints of $xy$, the set $\{x,y,z\}$ is also a 1-nearly independent vertex subset, for any $z\in V(G_v)$, implying that $xy$ is not a good edge. However, this contradicts the fact that $G$ is a good graph.
    \end{proof}

\begin{lem}
    \label{G-good-graph-with-a-cycle-then-G-cannot-contain-a-cut-vertex}
        If $G$ is a good graph and $G$ contains a cycle, then $G$ does not contain a cut-vertex.
\end{lem}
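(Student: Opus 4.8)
The plan is to argue by contradiction. Suppose $G$ is a good graph that contains a cycle but nevertheless has a cut-vertex $w$. Deleting $w$ disconnects the graph, so $G-w$ has components $B_1, B_2, \ldots, B_t$ with $t \ge 2$. My whole strategy is to exhibit a single edge of $G$ whose two endpoints lie in one and the same component $B_i$, and then to show that this edge violates the good-edge condition because its closed neighborhoods cannot reach any vertex of a different component $B_j$.

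First I would produce such an edge directly from the cycle, without splitting into cases according to whether the cycle passes through $w$. Let $C$ be a cycle of $G$. Since $G$ is simple, $C$ has length at least $3$, and $w$ is incident to at most two of the edges of $C$; hence $C$ must contain an edge $ab$ with $w \notin \{a,b\}$. Because $a$ and $b$ are adjacent in $G-w$, they lie in the same component, say $B_i$.

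Next I would bound the closed neighborhoods, which is the step I expect to carry the real weight. The key point is that, since $w$ is a cut-vertex, no vertex of $B_i$ can be adjacent to any vertex outside $\{w\} \cup V(B_i)$; such an adjacency would connect $B_i$ to another component inside $G-w$, contradicting that the $B_j$ are the components of $G-w$. Consequently $N_G[a] \cup N_G[b] \subseteq \{w\} \cup V(B_i)$. As $t \ge 2$, there is a component $B_j$ with $j \ne i$, and any vertex $z \in V(B_j)$ satisfies $z \notin \{w\} \cup V(B_i)$, hence $z \notin N_G[a] \cup N_G[b]$. Therefore $N_G[a] \cup N_G[b] \ne V(G)$, so $ab$ is not a good edge, contradicting the assumption that $G$ is a good graph. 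This contradiction establishes that $G$ has no cut-vertex.

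The difficulty here is conceptual rather than computational. The two ideas that make the argument clean are recognizing that a cycle of length at least $3$ automatically supplies an edge avoiding $w$ (so that no separate treatment is needed for cycles through $w$ versus cycles inside a single component), and realizing that the cut-vertex property is precisely what confines the endpoints' neighborhoods to one side of $w$. Once these are in place, the failure of the good-edge condition is immediate. I note in passing that this approach does not route through Lemma \ref{G-good-graph-with-a-cycle-then-G-cannot-contain-a-bridge}, since a cut-vertex need not be incident to a bridge, so the direct neighborhood argument is the natural path.
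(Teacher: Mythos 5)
Your proof is correct, and while it shares the paper's overall strategy --- assume a cut-vertex, locate an edge $ab$ of the cycle avoiding it, and show that $ab$ fails to be a good edge --- the way you certify the failure is genuinely different and, in fact, tighter. The paper picks two vertices $w$ and $z$ separated by the cut-vertex, takes a $w$--$z$ path $P$, asserts that some vertex of $P$ lies off the cycle, and declares that adding that vertex to $\{x,y\}$ yields an extra $1$-nearly independent subset; this leaves unverified both that such a vertex exists off the cycle and, more importantly, that it is adjacent to neither $x$ nor $y$. You instead work directly with the components $B_1,\ldots,B_t$ of $G-w$ and the closed-neighborhood form of the good-edge definition: since $a$ and $b$ lie in a single component $B_i$, you get $N_G[a]\cup N_G[b]\subseteq \{w\}\cup V(B_i)$ for free, so any vertex of another component witnesses $N_G[a]\cup N_G[b]\neq V(G)$. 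The non-adjacency that the paper takes for granted is exactly what your component containment delivers, and your observation that a cycle of length at least $3$ always has an edge missing $w$ removes any case split on whether the cycle passes through the cut-vertex. In short, your route buys rigor at no extra cost; the paper's route is shorter on the page but relies on steps that would need the same component analysis to be fully justified.
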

\begin{proof}
     Let $G$ be a good graph that contains a cycle, and suppose that $G$ contains a cut-vertex $v$. Then $G-v$ is disconnected and has at least two components, and there exists a pair of vertices $w$ and $z$ such that $v$ lies on every $w$-$z$ path. Let $P: w=w_1, w_2, \ldots, w_k=z$ be a $w$-$z$ path in $G$. Then $v=w_i$ for some $i$, where $2\le i \le k-1$. Let $C$ be a cycle of $G$. Then we observe that $V(P)\setminus V(C) \ne \emptyset$. Let $w_1 \in V(P)\setminus V(C)$ and let $xy$ be an edge of $C$ such that $x\ne v\ne y$. Thus, in addition to the $1$-nearly independent  vertex subsets made of the endpoints of $xy$, the set $\{x,y,w_1\}$ is also a 1-nearly independent vertex subset, implying that $xy$ is not a good edge. However, this contradicts the fact that $G$ is a good graph.
\end{proof}

We are now in a good position to present the proof of our main result.

\begin{thm}
\label{thm:main}
    Let $G$ be a connected graph of order $n\ge 3$. If $G$ contains a cycle, then 
    \begin{align*}
        \sigma_1(G) \ge \begin{cases}
            n & \text{ if } n =3\\
            2n-4 & \text{ if } n \ge 4,
        \end{cases}
    \end{align*}
    with equality if and only if $G \in \{K_3, K_{2,n-2}\}$.
\end{thm}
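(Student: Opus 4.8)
The plan is to prove the lower bound and the extremal characterisation simultaneously by strong induction on $n$, driving everything through the recursive formula applied at a carefully chosen vertex. I would first dispose of the ``achievability'' direction, which is immediate: $\sigma_1(K_3)=3=n$ by inspection, and for $n\ge 4$ the graph $K_{2,n-2}$ is a good graph containing a $4$-cycle, so Theorem~\ref{sigma-1-of-G-at-least-m-with-equality-iff-G-is-a-good-graph} gives $\sigma_1(K_{2,n-2})=m=2(n-2)=2n-4$. The real content is the lower bound together with uniqueness.

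For the base cases I would check $n=3$ directly (the only connected graph with a cycle is $K_3$) and $n=4$ by listing the four connected graphs with a cycle, namely $C_4$, the paw (a triangle with a pendant edge), $K_4-e$, and $K_4$; a short computation shows the minimum $\sigma_1=4$ is attained only by $C_4=K_{2,2}$. For $n\ge 5$ I would use $\sigma_1(G)=\sigma_1(G-v)+\sigma_1(G-N[v])+\sum_{u\in N(v)}\sigma_0(G-(N[u]\cup N[v]))$. The engine is this: if I can select a vertex $v$ with $\deg v\ge 2$ for which $G-v$ is connected \emph{and} still contains a cycle, then induction gives $\sigma_1(G-v)\ge 2(n-1)-4=2n-6$, while $\sigma_1(G-N[v])\ge 0$ and each of the $\deg v\ge 2$ terms $\sigma_0(\cdot)\ge 1$, so $\sigma_1(G)\ge (2n-6)+0+2=2n-4$.

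The whole argument thus reduces to producing such a $v$, and the bulk of the work is the case analysis. If $G$ has a leaf $\ell$ with support $\ell'$, I would instead delete $\ell$; the recursion becomes $\sigma_1(G)=\sigma_1(G-\ell)+\sigma_1(G-\{\ell,\ell'\})+\sigma_0(G-N[\ell'])$, and I would show $\sigma_1(G-\{\ell,\ell'\})\ge 2$ (its $\ge 3$ surviving vertices still carry an edge coming from the cycle), forcing the \emph{strict} inequality $\sigma_1(G)\ge 2n-3$, consistent with $K_{2,n-2}$ being leafless. If $\delta(G)\ge 2$ and $G=C_n$, then directly $\sigma_1(C_n)=n\cdot\sigma_0(P_{n-4})\ge 2n>2n-4$. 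If $\delta(G)\ge 2$ and $G$ is $2$-connected but $G\ne C_n$, then $m\ge n+1$, and an averaging step (were every degree to exceed $m-n+1$, we would get $m\le n$) yields a vertex $v$ with $\deg v\le m-n+1$, so $G-v$ is connected with at least $n-1$ edges and hence contains a cycle. If $\delta(G)\ge 2$ and $G$ has a cut-vertex, its block tree has at least two endblocks, each of which is $2$-connected because no endblock can be a bridge when $\delta\ge 2$; deleting any non-cut vertex $v$ of one endblock keeps $G-v$ connected while a cycle survives in another endblock.

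The equality discussion is then threaded through the same cases. In the leaf, cycle, and cut-vertex cases I expect strict inequality, so no new extremal graph arises; the cut-vertex case in particular I would close by observing that equality would force $\deg v=2$ and $G-v\cong K_{2,n-3}$, which makes $G$ itself $2$-connected, a contradiction. In the $2$-connected case equality forces (by the inductive characterisation) $G-v\cong K_{2,n-3}$, $\deg v=2$, and $\sigma_1(G-N[v])=0$; this last condition says $K_{2,n-3}$ minus the two neighbours of $v$ is edgeless, which is possible only when those neighbours are exactly the two vertices of the small part, yielding $G\cong K_{2,n-2}$. I expect the main obstacle to be precisely this structural bookkeeping --- guaranteeing a deletable vertex in each configuration and carrying the equality analysis cleanly through the recursion --- rather than any single hard inequality. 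As an independent sanity check on the extremal graph among good graphs, one can combine Lemmas~\ref{G-good-graph-with-a-cycle-then-G-cannot-contain-a-bridge} and~\ref{G-good-graph-with-a-cycle-then-G-cannot-contain-a-cut-vertex} (good graph with a cycle is $2$-connected) with the observation that every edge $uv$ of a good graph satisfies $\deg u+\deg v\ge n$; summing over the neighbourhood of a minimum-degree vertex gives $m\ge\delta(n-\delta)\ge 2n-4$ for $2\le\delta\le n-2$, with equality forcing $\delta=2$ and $G\cong K_{2,n-2}$.
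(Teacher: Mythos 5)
Your proposal is correct in its essentials but takes a genuinely different route from the paper. The paper does not hunt for a deletable vertex case by case; it uses Theorem~\ref{sigma-1-of-G-at-least-m-with-equality-iff-G-is-a-good-graph} to reduce the problem to minimising the number of edges of a \emph{good} graph containing a cycle, disposes of $\Delta(G)=n-1$ and $\delta(G)\ge 3$ with the recursion plus induction, and then proves a chain of structural claims about good graphs with $\delta=2$ and $\Delta=n-2$ (every degree-$2$ vertex is an $(n-2,n-2)$-vertex, every degree-$(n-2)$ vertex is a $(2,\ldots,2)$-vertex, and no other degrees occur) that force $G\cong K_{2,n-2}$; your closing ``sanity check'' via $\deg u+\deg v\ge n$ for every good edge $uv$ is in fact much closer in spirit to the paper's argument than your main line is. Your main line --- induction organised by connectivity structure (leaf, cycle, $2$-connected non-cycle via the averaging bound producing $v$ with $\deg v\le m-n+1$, cut-vertex via endblocks), in each case exhibiting a degree-$\ge 2$ vertex whose deletion preserves connectivity and a cycle so that $\sigma_1(G)\ge\sigma_1(G-v)+\deg v\ge 2n-4$ --- is sound and buys you independence from the structure theory of good graphs, at the price of more connectivity bookkeeping. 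Two small points to watch in a write-up: in the equality analysis of the $2$-connected case at $n=5$, deleting \emph{either} part of $K_{2,2}$ leaves an edgeless graph, so both placements of $N(v)$ must be checked (both yield $K_{2,3}$); and the inequality $m\ge\delta(n-\delta)$ in your sanity check requires summing degrees over all of $V(G)$ (the non-neighbours of the minimum-degree vertex still contribute $\ge\delta$ each), not only over that vertex's neighbourhood.
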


\begin{proof}
    Let $G$ be a connected graph of order $n\ge 3$ that contains a cycle.  By Lemma \ref{G-good-graph-with-a-cycle-then-G-cannot-contain-a-bridge}, $G$ does not contain a bridge. Thus, every edge of $G$ is a cycle edge, implying that $\delta(G) \ge 2$. If $n=3$, then $G \cong K_3$ and $\sigma_1(G) = 3 =n$. Hence, we may assume that $n\ge 4$, for otherwise there is nothing left to prove. If $n=4$, then $\sigma_1(K_4)=6>\sigma_1(K_4-e)=5>\sigma_1(K_{2,2})=4=2n-4$, thereby proving the base case. Assume the result is true for all graphs of order $n<k$, where $k\ge 4$, and let $G$ be a graph of order $n=k$. By Theorem \ref{sigma-1-of-G-at-least-m-with-equality-iff-G-is-a-good-graph}, $\sigma_1(G) \ge |E(G)|\ge n-1$, where the $(n-1)$-bound is uniquely attained by the star $K_{1,n-1}$, and the $|E(G)|$-bound is attained if $G$ is a good graph. We emphasize that the star $K_{1,n-1}$ is also a good graph. Thus, it is clear that we want to find the minimum number of edges in a good graph $G$ of order $n$ that contains a cycle. We proceed with the following series of claims.

\begin{claim}
\label{if-max-degree-n-1-then-sigm_1-G-strictly-greater-than-2n-4}
    If $\Delta(G)=n-1$, then $\sigma_1(G) > 2n-4$.
\end{claim}
\begin{proof}
    Suppose $\Delta(G)=n-1$, and let $v$ be a vertex of degree $n-1$ in $G$. Then the graph $G' = G-v$ has order $n-1$ and, by Lemma \ref{G-good-graph-with-a-cycle-then-G-cannot-contain-a-cut-vertex}, $G$ does not contain a cut-vertex, so $G-v$ is connected. Therefore, we have
    \begin{align*}
        \sigma_1(G) &= \sigma_1(G-v) + \sigma_1(G - N_G[v]) + \displaystyle \sum\limits_{u\in N_G(v)}\sigma_0(G-(N_G[u]\cup N_G[v]))\\
        &\ge 2(n-1)-4 + \sigma_1(\emptyset) + (n-1)\sigma_0(\emptyset)\\
        &=(2n-4) + (n-3)\\
        &>2n-4, & \text{since } n\ge 4.
    \end{align*}
    This completes the proof of Claim \ref{if-max-degree-n-1-then-sigm_1-G-strictly-greater-than-2n-4}.
\end{proof}

By Claim \ref{if-max-degree-n-1-then-sigm_1-G-strictly-greater-than-2n-4}, we may assume that $\Delta(G) \le n-2$, for otherwise that is nothing left to prove. Also, recall that $G$ contains neither a bridge nor a cut-vertex.

\begin{claim}
\label{if-min-degree-atleast-3-then-sigm_1-G-strictly-greater-than-2n-4}
    If $\delta(G)\ge 3$, then $\sigma_1(G) > 2n-4$.
\end{claim}
\begin{proof}
    Suppose $\delta(G) = q \ge 3$, and let $v$ be a vertex of degree $q$ in $G$. Then the graph $G' = G-v$ has order $n-1$ and, by Lemma \ref{G-good-graph-with-a-cycle-then-G-cannot-contain-a-cut-vertex}, $G$ does not contain a cut-vertex, so $G-v$ is connected. Therefore, we have
    \begin{align*}
        \sigma_1(G) &= \sigma_1(G-v) + \sigma_1(G - N_G[v]) + \displaystyle \sum\limits_{u\in N_G(v)}\sigma_0(G-(N_G[u]\cup N_G[v]))\\
        &\ge 2(n-1)-4 + \sigma_1(\emptyset) + q\sigma_0(\emptyset)\\
        &\ge (2n-4) -2 + 0 + 3(1)\\
        &>2n-4.
    \end{align*}
    This completes the proof of Claim \ref{if-min-degree-atleast-3-then-sigm_1-G-strictly-greater-than-2n-4}.
\end{proof}

By Claim \ref{if-min-degree-atleast-3-then-sigm_1-G-strictly-greater-than-2n-4}, we may assume that $\delta(G) = 2$, for otherwise that is nothing left to prove.

\begin{claim}
\label{if-G-good-graph-n-min-deg-2-then-max-deg-at-least-n-2}
    If $G$ is a good graph of order $n$ with $\delta(G)=2$, then $\Delta(G) \ge n-2$.
\end{claim}
   
\begin{proof}
    Let $G$ be a good graph of order $n$ with $\delta(G)=2$, and let $u$ be a vertex of $G$ with $\deg_G(u) =2$. Suppose, to the contrary, that $\Delta(G)\le n-3$, and let $v$ be a vertex of maximum degree in $G$. Denote by $\overline{N_G(v)}$ the set of vertices of $G$ that are not adjacent of $v$ in $G$, and $N_G(v)$ the set of vertices of $G$ that are adjacent to $v$ in $G$. Clearly, $\overline{N_G(v)} \cap N_G(v) =\emptyset$. Furthermore, since $\deg_G(v) \le n-3$, we must have $|\overline{N_G(v)}| \ge 2$, implying that there are at least two vertices of $G$ that are not adjacent to $v$ in $G$. Let $u$ be a vertex of degree $2$ in $G$. Then either $u\in N_G(v)$ or $u\in \overline{N_G(v)}$. 
    
    Suppose $u \in N_G(v)$. Since $\deg_G(u) =2$, there is at most one neighbour of $u$ that belongs to $\overline{N_G(v)}$. However, $|\overline{N_G(v)}|\ge 2$, so there is at least one vertex $\overline{v} \in \overline{N_G(v)}$ such that $\overline{v}$ is adjacent to neither $u$ nor $v$. Thus, the edge $uv$ is not a good edge in $G$, contradicting the fact that $G$ is a good graph. Hence, we may assume that $u \in \overline{N_G(v)}$. Since $\deg_G(u) =2$ and $\deg_G(v)\le n-3$, there must exist an edge $vw \in E(G)$ such that $w\notin N_G(u)$. Thus, the edge $vw$ is not a good edge in $G$, contradicting the fact that $G$ is a good graph.
\end{proof} 

 By Claim \ref{if-G-good-graph-n-min-deg-2-then-max-deg-at-least-n-2}, if $G$ is a good graph of order $n$ with $\delta(G)=2$, then $\Delta(G) \ge n-2$. Also, recall that by Claim \ref{if-max-degree-n-1-then-sigm_1-G-strictly-greater-than-2n-4}, we have  $\Delta(G) \le n-2$. Consequently, $\Delta(G)=n-2$.

\begin{claim}
    \label{every-vertex-of-deg-2-in-G-is-an-n-minus-2-n-minus-2-vertex}
    If $G$ is a good graph of order $n$ with $\delta(G) =2$ and $\Delta(G)=n-2$, then every vertex of degree $2$ in $G$ is an $(n-2, n-2)$-vertex.
\end{claim}

\begin{proof}
    Let $G$ be a good graph of order $n$ with $\delta(G) =2$ and $\Delta(G)=n-2$, and let $u_1$ be a vertex of degree $2$ in $G$ whose neighbours are $v$ and $w$, respectively. Suppose, to the contrary, that $\deg_G(v) = q \le n-2$ and $\deg_G(w) = r \le n-3$. Without any loss of generality, we may assume that $q>r$. Let 
    \begin{align*}
        N_G(v) =\displaystyle \bigcup\limits_{i=1}^q \{u_i\} \text{ and } N_G(w) =\displaystyle \bigcup\limits_{j=1}^r \{w_j\}.
    \end{align*}
    Note that it is possible that $N_G(w) \subset N_G(v)$. Since $\deg_G(u_1) =2$ and $N_G(u_1) =\{v,w\}$, the edge $u_1u_i$, where $2\le i \le q$, does not exist in $G$. Moreover, since $q>r$, there exists a vertex $x=u_i$, where $2\le i \le q$, such that $x$ is a neighbour of $v$ and $x$ is not a neighbour of $w$. Thus, the edge $u_1w$ is not a good edge in $G$, contradicting the fact that $G$ is a good graph.
\end{proof}

\begin{claim}
    \label{every-vertex-of-deg-n-minus-2-in-G-is-a-2-2-2-2-vertex}
    If $G$ is a good graph of order $n$ with $\delta(G) =2$ and $\Delta(G)=n-2$, then every vertex of degree $n-2$ in $G$ is a $(\underbrace{2,2, \ldots, 2}_{n-2 \text{ times }})$-vertex.
\end{claim}

\begin{proof}
    Let $G$ be a good graph of order $n$ with $\delta(G) =2$ and $\Delta(G)=n-2$. Let $u$ be a vertex of degree $n-2$ in $G$, and whose set of neighbours is $N_G(u) = \{u_1, u_2, \ldots, u_{n-2}\}$. Suppose, to the contrary, that there exists a vertex $u_i\in N_G(u)$, where $1\le i\le n-2$, such that $\deg_G(u_i) \ge 3$. 

    Rename the vertices $u_i \in N_G(u)$, for $1\le i\le n-2$, according to the descending order of their degrees. That is, $\deg_G(u_1) \ge \deg_G(u_2) \ge \cdots \ge \deg_G(u_{n-2})$. Since $\Delta(G)=n-2$, each vertex $u_i\in N_G(u)$ is adjacent to at most $n-4$ other vertices in $N_G(u)$, implying that the edge $uu_{n-2}$ does not exist in $G$. Let $w$ be a vertex of degree $2$ in $G$, and whose neighbours are $w_1$ and $w_2$, respectively. By Claim \ref{every-vertex-of-deg-2-in-G-is-an-n-minus-2-n-minus-2-vertex}, $\deg_G(w_i) = n-2$ for $1 \le i\le  2$.

    Suppose $w\notin N_G(u)$. Since $\deg_G(u) = n-2$, both the neighbours of $w$ must belong to $N_G(u)$. Thus, we may assume that $w_i = u_i$ for $1 \le i\le  2$. However, the edge $uu_3$ is not a good edge in $G$ because both its endpoints are not adjacent to $w$. Hence, we may assume that $w\in N_G(u)$. Without any loss of generality, assume that $w_1 = u$. If $w_2\in N_G(u)$, then $\deg_G(u)=\deg_G(w_1) = n-1$, a contradiction. Hence, we may assume that $w_2 \notin N_G(u)$. However, the edge $u_iu_j$, for $1 \le i\le n-3$ and $i+1\le j\le n-3$, is not a good edge in $G$ because both its endpoints are not adjacent to $w$. This completes the proof of Claim \ref{every-vertex-of-deg-n-minus-2-in-G-is-a-2-2-2-2-vertex}.
\end{proof}

\begin{claim}
    \label{every-vertex-either-has-degree-2-or-has-degree-n-minus-2}
    If $G$ is a good graph of order $n$ with $\delta(G)=2$ and $\Delta(G)=n-2$, then every vertex of $G$ either has degree $2$ in $G$ or has degree $n-2$ in $G$.
\end{claim}

\begin{proof}
    Let $G$ be a good graph of order $n$ with $\delta(G)=2$ and $\Delta(G)=n-2$, and let $v$ be a vertex of $G$ such that $\deg_G(v)=q\ge 3$. We will show that $\deg_G(v)=n-2$. Suppose, to the contrary, that $\deg_G(v)\le n-3$. Let $u$ be a vertex of degree $n-2$ in $G$, and whose set of neighbours is $N_G(u) = \{u_1, u_2, \ldots, u_{n-2}\}$. Furthermore, let $N_G(v) =\{v_1, v_2, \ldots, v_q\}$ be the set of neighbours of $v$.

    Since $G$ has $n$ vertices, and $\deg_G(u)=n-2$, either $v\in N_G(u)$ or $v\notin N_G(u)$. Suppose $v\in N_G(u)$. By Claim \ref{every-vertex-of-deg-n-minus-2-in-G-is-a-2-2-2-2-vertex}, $\deg_G(u_i)=2$ for all $i$, where $1\le i \le n-2$, a contradiction. Hence, we may assume that $v \notin N_G(u)$. If $v\notin N_G(u)$, then we must have $N_G(v) \subset N_G(u)$. Thus, there exits $k$, where $1\le k\le n-2$, such that $uu_k \in E(G)$ and $vu_k \notin E(G)$. Such an edge $uu_k$ is not a good edge because both its endpoints are not adjacent to $v$. Thus, we have a contradiction and so we conclude that $\deg_G(v)=2$ or $\deg_G(v)=n-2$. 
\end{proof}

Recall that by Claim \ref{if-min-degree-atleast-3-then-sigm_1-G-strictly-greater-than-2n-4}, $\delta(G)=2$. By Claim \ref{if-max-degree-n-1-then-sigm_1-G-strictly-greater-than-2n-4}, we have  $\Delta(G) \le n-2$. Furthermore, by Claim \ref{if-G-good-graph-n-min-deg-2-then-max-deg-at-least-n-2}, if $G$ is a good graph of order $n$ with $\delta(G)=2$, then $\Delta(G) \ge n-2$, implying that $\Delta(G)=n-2$. By Claim \ref{every-vertex-of-deg-2-in-G-is-an-n-minus-2-n-minus-2-vertex}, every vertex of degree $2$ in $G$ is an $(n-2, n-2)$-vertex. By Claim \ref{every-vertex-of-deg-n-minus-2-in-G-is-a-2-2-2-2-vertex}, every vertex of degree $n-2$ in $G$ is a $(\underbrace{2,2, \ldots, 2}_{n-2 \text{ times }})$-vertex. Lastly, by Claim \ref{every-vertex-either-has-degree-2-or-has-degree-n-minus-2}, every vertex of $G$ either has degree $2$ in $G$ or has degree $n-2$ in $G$. These properties of the connected graph $G$ of order $n\ge 4$ that contains a cycle and has minimum $\sigma_1$ are sufficient to deduce that $G \cong K_{2,n-2}$, thereby completing the proof of Theorem \ref{thm:main}.
\end{proof}

\newpage
\bibliographystyle{abbrv} 
\bibliography{references}

\end{document}